\documentclass[12pt]{article}

\usepackage{amsfonts}
\usepackage{amsmath}
\usepackage{amssymb}
\usepackage{amsthm}

\def\al{\alpha}

\def\om{\omega}
\def\Om{\Omega}
\def\hOmn{\hat{\Omega}_m}
\def\hOmnp{\hat{\Omega}_{m, \pi}}
\def\hPn{\hat{P}_m}
\def\hPnp{\hat{P}_{m,\pi}}

\def\Ph1{P^{(h_1)}}
\def\Ph2{P^{(h_2)}}

\def\b0{{\bf 0}}

\def\bnu{\bar\nu}

\def\1lbnu{{\bnu}_{\lambda_1}}
\def\2lbnu{{\bnu}_{\lambda_2}}

\newtheorem{thm}{Theorem}[section]
\newtheorem{prop}[thm]{Proposition}

\theoremstyle{plain}

\begin{document}

\title{A BK inequality for randomly drawn subsets of fixed size}

\author{J. van den Berg\footnote{CWI and VU University Amsterdam} 
and J. Jonasson\footnote{Chalmers University of Technology and Gothenburg University} \\
{\footnotesize email: J.van.den.Berg@cwi.nl, jonasson@chalmers.se}
}

\date{}
\maketitle

\begin{abstract}
The BK inequality (\cite{BK85}) says that,
for product measures on $\{0,1\}^n$, the probability
that two increasing events $A$ and $B$ `occur disjointly' is at most the product of the two individual
probabilities. The conjecture in \cite{BK85} that this holds for {\em all} events was proved in \cite{R00}. 

Several other problems in this area remained open.
For instance, although it is easy to see that non-product measures cannot satisfy the above
inequality for {\em all} events,  
there are several such measures which, intuitively, should satisfy the inequality for all 
{\em increasing} events. One of the most natural
candidates is
the measure assigning equal probabilities to all
configurations with exactly $k$ $1$'s (and probability $0$ to all other configurations).

The main contribution of this paper is a proof for these measures.
We also point out how our result extends to weighted versions of these measures, and to
products of such measures.

\end{abstract}
{\it Key words and phrases:} BK inequality, negative dependence. \\
{\it AMS subject classification:} 60C05, 60K35.

\begin{section}{Introduction and statement of results}

We start with an illustrative example, where two persons have to divide a random
collection of resources: 

\smallskip\noindent
{\bf Example.}
A large box contains thirty items, which we simply name by the numbers $1, \cdots, 30$.
Two persons, Alice and Bob, both have a list of those subsets of $\{1, \cdots, 30\}$ that he or she 
considers `useful'.
For instance, the items could be tools, and Alice may need to do a certain job which can
be performed with the combination of tools $\{5, 16,20 \}$, but can also be performed with the
combination $\{5, 18, 20\}$, the
combination $\{8, 25\}$ etcetera.
Similarly for Bob.

Now suppose that a fixed number, say, ten, of items is drawn randomly (uniformly) from the box.
These ten items are to be divided between Alice and Bob. For both persons to be satisfied, there must
be a pair of disjoint subsets of the ten items, such that one of these subsets is on Alice's
list, and the other is on Bob's list.
Our main result, Theorem \ref{bkrx}, says that the probability of this event is
at most the product of the probability that the above set of ten items
contains a set on Alice's list and the probability of the analogous event concerning Bob's list.

\begin{subsection}{Definitions, statement of results, and background} \label{sect-defs}

Before we state our main result,
we recall results from the literature which motivate our current work and are used in our proofs.
First some notation and definitions:
Throughout this paper, $\Om$ will denote the set $\{0,1\}^n$, and $P_p$ the product distribution on $\Om$ with parameter $p$.
We will often use the notation $[n]$ for $\{1, \cdots,n\}$.
For $\om \in \Om$ and $S \subset [n]$, we define $\om_S$ as the `tuple' $(\om_i, i \in S)$.
Further we use the notation $[\om]_S$ for the set of all elements of $\Om$ that `agree with $\om$ on $S$'. More formally,
$$[\om]_S := \{\al \in \Om \, : \, \al_S = \om_S\}.$$

Now, for $A, B \subset \Om$, $A \square B$ is defined as the event that $A$ and $B$ `occur disjointly' in the sense
that
there are disjoint subsets $K, L \subset [n]$ such that, informally speaking,
the $\om$ values on $K$ guarantee that
$A$ occurs, and the $\om$ values on $L$ guarantee that $B$ occurs. Formally, the definition is:

$$A \square B = \{\om \in \Om \, : \, \exists \text{ disjoint } K, L \subset [n] \mbox{ s.t. } 
[\om]_K \subset A \text{ and } [\om]_L \subset B\}.$$

For $\om$ and $\om' \in \Om$ we write $\om' \geq \om$ if $\om'_i \geq \om_i$ for all $i \in [n]$.
An event $A \subset \Om$ is said to be increasing if $\om' \in A$ whenever $\om \in A$ and $\om' \geq \om$. \\

Inequality \eqref{bkr-eq} below
was proved for increasing events in \cite{BK85}. That special
case has become a widely used tool in percolation theory and related topics (see e.g. \cite{G99} and \cite{G10}).
The paper \cite{BK85} also stated the conjecture that \eqref{bkr-eq} holds for all events.
Some other special cases were proved in \cite{BF87} and \cite{T94}.
There was not much hope for a proof of the general case until finally
this was obtained by the then unknown young mathematician D. Reimer, see \cite{R00}:

\begin{thm} \label{bkr}
For all $n$ and all $A, B \subset \{0,1\}^n$,
\begin{equation}\label{bkr-eq}
P_p(A \square B) \leq P_p(A) P_p(B).
\end{equation}
\end{thm}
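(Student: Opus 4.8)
The plan is to reduce Theorem~\ref{bkr} to a purely combinatorial inequality for the uniform measure and then to prove that by an encoding argument. Standard ``splitting of coordinates'' reductions (in the spirit of \cite{BK85}) reduce the theorem to the case $p=1/2$: assuming $p$ dyadic, one replaces each coordinate of $\{0,1\}^n$ by a block of fair coordinates equipped with a suitable read-out map, in a way that is compatible with the operation $\square$. Since $P_{1/2}(C)=|C|/2^{n}$, it then suffices to prove the finite statement
\begin{equation*}
2^{n}\,\bigl|A\,\square\,B\bigr|\;\le\;|A|\cdot|B|\qquad\text{for all }A,B\subseteq\{0,1\}^{n}.
\end{equation*}

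To prove this I would look for an injection $\Phi\colon (A\,\square\,B)\times\{0,1\}^{n}\to A\times B$. The naive attempt is instructive: for $\om\in A\,\square\,B$ choose, in a canonical way, disjoint witness sets $K,L\subseteq[n]$ with $[\om]_{K}\subseteq A$ and $[\om]_{L}\subseteq B$, and, given an auxiliary $\tau\in\{0,1\}^{n}$, let $\al$ agree with $\om$ on $K$ and with $\tau$ off $K$, and let $\be$ agree with $\om$ on $L$ and with $\tau$ off $L$. Then $\al\in[\om]_{K}\subseteq A$ and $\be\in[\om]_{L}\subseteq B$, so $(\om,\tau)\mapsto(\al,\be)$ does land in $A\times B$; but it is \emph{not} injective, because the bits of $\om$ on $[n]\setminus(K\cup L)$ cannot be recovered from $(\al,\be)$. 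Recovering that lost information is the whole difficulty, and is why the general conjecture of \cite{BK85} remained open for some fifteen years.

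The repair, due to Reimer, and the step I expect to be by far the hardest, brings in a \emph{reflection}: alongside each witness subcube one also considers its mirror image, obtained by flipping the coordinates that ``belong to the other event.'' What one needs is, in effect, a self-duality lemma --- that the witness subcubes recording membership in $A$, together with the reflected witness subcubes recording membership in $B$, can be packed disjointly inside $\{0,1\}^{n}\times\{0,1\}^{n}$ --- which is exactly what produces the factor $2^{n}$. I would try to prove such a lemma by induction on $n$, splitting off the last coordinate: writing $A$ as the disjoint union of $A^{0}\times\{0\}$ and $A^{1}\times\{1\}$ with $A^{0},A^{1}\subseteq\{0,1\}^{n-1}$, and likewise for $B$, one must control $A\,\square\,B$ on the slices $\{\om_{n}=0\}$ and $\{\om_{n}=1\}$ in terms of $\square$-operations among $A^{0},A^{1},B^{0},B^{1}$, according to whether a witness uses coordinate $n$ for $A$, for $B$, or not at all; and it is precisely at this point that complements of the $A^{i}$ and $B^{i}$ must enter, via the reflection, so that the inductive step is genuinely not the obvious one. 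Reducing Theorem~\ref{bkr} to a clean reflection lemma and then verifying that lemma is the crux; the coordinate-splitting reduction at the start and the base case $n=1$ are routine by comparison.

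Finally I would record why the ``soft'' machinery does not help here: FKG, the Ahlswede--Daykin inequality and the four-functions theorem all make essential use of monotonicity and genuinely fail for non-increasing $A,B$, so a combinatorial/geometric argument of the above type seems unavoidable.
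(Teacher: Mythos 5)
Your plan correctly identifies where the difficulty lives, but it does not cross it: the ``reflection/self-duality lemma'' you defer to is precisely Reimer's intermediate result (Proposition \ref{reimer} in this paper, $|A\,\square\,B|\le|A\cap\bar B|$, equivalently his butterfly lemma), and that lemma \emph{is} the theorem, not a step one can leave as ``I would try induction on $n$, splitting off the last coordinate.'' That inductive route is exactly what people attempted for the fifteen years the conjecture stood open; the slice events $A^{0},A^{1},B^{0},B^{1}$ do not interact with $\square$ in a way that closes any known induction, and the only known proof is Reimer's linear-algebraic argument (an independence/rank computation for the witness subcubes and their reflections), which your sketch neither reproduces nor replaces. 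So as a proof of Theorem \ref{bkr} the proposal has a genuine gap at its central point. A secondary, fixable omission: even granting the reflection lemma in the form $|A\,\square\,B|\le|A\cap\bar B|$, you still need the decomposition of $\{0,1\}^n\times\{0,1\}^n$ into cells of pairs that agree on a set $K$ and are complementary on $K^{c}$ (applying the lemma inside each cell to the restricted events) to reach $P_p(A\,\square\,B)\le P_p(A)P_p(B)$; your ``packing produces the factor $2^{n}$'' gestures at this but no injection or cell argument is actually constructed.

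On the route itself: the paper does not reprove Theorem \ref{bkr}; it quotes it from \cite{R00} and isolates Proposition \ref{reimer} as the hard ingredient, noting that the passage from such a statement to \eqref{bkr-eq} was known earlier (\cite{T94}, \cite{BF87}) --- it is this passage, via the cell decomposition just described, that Section \ref{reimx-1} adapts to the $k$-out-of-$n$ setting. Note also that your preliminary reduction to $p=1/2$ by dyadic coordinate-splitting and continuity, while legitimate (it is in the spirit of \cite{BF87}), is unnecessary: within each cell the product weight $P_p\times P_p$ is constant for every $p$, since the coordinates in $K^{c}$ contribute a factor $(p(1-p))^{|K^{c}|}$ to each pair, so the cell argument yields \eqref{bkr-eq} for all $p$ at once from the single counting inequality of Proposition \ref{reimer}.
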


\medskip
It is easy to see that
non-product measures on $\{0,1\}^n$, cannot satisfy (the analog of) \eqref{bkr-eq} for all events.
However, one may intuitively expect that many measures 
do satisfy the analog of \eqref{bkr-eq} for all {\em increasing} events. Such measures are said to have the BK property
(or, simply, to be BK measures). The most intuitively appealing
case where one may expect this property to hold, is the measure corresponding with randomly,
uniformly, drawing a subset of fixed size from the set $[n]$. (See
Section 3.1 of \cite{G94}, and the lines below
(4.18) in \cite{G10} where this has been conjectured).
It seems (oral communication) that several researchers have made efforts to prove this.

To be precise,
let $k \leq n$ and let $\Om_{k,n}$ be the set of all
$\om \in \Om$ with exactly $k$ $1's$.
Let $P_{k,n}$ (which we call the $k$-out-of-$n$ measure) be the distribution on $\Om$ that assigns
equal probability to all $\om \in \Om_{k,n}$ and probability $0$ to all other elements of $\Om$. 
Our main result, Theorem \ref{bkrx} below, is that such measures
indeed have the BK property. As far as we know, this is the first substantial example of a non-product
BK measure.

\begin{thm} \label{bkrx}
For all $n$, all $k \leq n$, and all increasing $A, B \subset \{0,1\}^n$,
\begin{equation}\label{bkrx-eq}
P_{k,n}(A \square B) \leq P_{k,n}(A) P_{k,n}(B).
\end{equation}
\end{thm}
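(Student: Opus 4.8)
The plan is to derive Theorem~\ref{bkrx} from Reimer's theorem (Theorem~\ref{bkr}) by a coupling/limiting argument that converts the product measure $P_p$ on $\{0,1\}^N$ (for $N$ large) into the $k$-out-of-$n$ measure $P_{k,n}$. The rough idea: realize a uniformly random $k$-subset of $[n]$ as a function of a product measure on a larger cube, in such a way that disjoint occurrence upstairs maps to disjoint occurrence downstairs. A clean way to attempt this is to split each coordinate $i\in[n]$ into a block of $m$ i.i.d.\ fair bits (so $N=nm$), and let $\om_i$ be determined by whether block $i$ wins some tie-breaking competition; conditioning on exactly $k$ of the $n$ blocks being ``selected'' recovers $P_{k,n}$, and as $m\to\infty$ one should be able to make the conditioning event have probability bounded away from $0$ while disjointness is preserved. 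The main work is to set up the block map $\Phi:\{0,1\}^{N}\to\{0,1\}^n$ so that (i) $\Phi$ pushes a suitable conditioned product measure to $P_{k,n}$, (ii) $\Phi^{-1}(A)$ and $\Phi^{-1}(B)$ are increasing when $A,B$ are, and (iii) $\Phi^{-1}(A)\,\square\,\Phi^{-1}(B)\supseteq \Phi^{-1}(A\,\square\,B)$ on the relevant part of the space, so that applying \eqref{bkr-eq} upstairs and pushing forward gives \eqref{bkrx-eq}.

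Concretely, here are the steps I would carry out. First, fix $n,k$ and increasing $A,B\subset\{0,1\}^n$. Second, introduce for each coordinate $i$ an independent continuous (or finely discretized) random variable $U_i$, uniform on $[0,1]$, and let the $k$-subset be the indices of the $k$ largest $U_i$'s; the resulting indicator vector has law $P_{k,n}$. Third, discretize each $U_i$ into $m$ fair bits and observe that the event ``coordinate $i$ is among the top $k$'' is, after conditioning to break ties, an increasing event in those bits, and that the vector of these events has a law converging to $P_{k,n}$ as $m\to\infty$. Fourth --- the crucial combinatorial point --- check that if $A$ occurs disjointly from $B$ in the downstairs configuration $\om$, witnessed by disjoint $K,L\subset[n]$, then upstairs the bit-blocks indexed by $K$ and by $L$ are disjoint and their values force $\Phi^{-1}(A)$ and $\Phi^{-1}(B)$ respectively, so $A\square B$ pulls back into $\Phi^{-1}(A)\square\Phi^{-1}(B)$. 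Fifth, apply Reimer's inequality \eqref{bkr-eq} to the product measure upstairs, push forward through $\Phi$, and let $m\to\infty$ to obtain \eqref{bkrx-eq} in the limit.

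The hard part, I expect, will be Step~four together with the limiting bookkeeping: the naive pullback does not literally commute with $\square$, because the ``top-$k$'' structure is global --- whether coordinate $i$ is selected depends on \emph{all} the $U_j$'s, not just block $i$ --- so the sets $K$ and $L$ certifying $A\square B$ downstairs need not translate into bit-sets that certify the pulled-back events \emph{within the conditioned subspace}. One likely needs to enlarge the certifying bit-sets to also record enough comparison information $U_i$ vs.\ $U_j$ to pin down the selection, while keeping the $K$-side and $L$-side enlargements disjoint; arranging this disjointly is exactly where the argument could break, and it may force a more careful encoding (e.g.\ comparing each $U_i$ only against a fixed threshold rather than against the other $U_j$'s, at the cost of only asymptotically producing the conditioned measure). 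A secondary technical obstacle is controlling the conditioning: one must ensure $P_p(\text{exactly }k\text{ selected})$ stays bounded away from $0$ (or handle it via an explicit $p=p(n,k)$ and a ratio estimate) so that conditioning a BK-type inequality does not blow up the constants; this is routine but must be done with care since conditioning in general destroys the BK property.
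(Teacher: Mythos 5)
The central step of your plan --- point (iii), pulling back the $\square$-operation through the ``top-$k$'' map $\Phi$ --- is exactly where the argument breaks, and you have correctly identified this yourself; but the fixes you sketch cannot repair it. Whether coordinate $i$ is selected depends on \emph{all} the blocks, so a downstairs witness pair $K,L$ for $A\square B$ forces both the $A$-certificate and the $B$-certificate upstairs to use bits from the blocks outside $K\cup L$ (one needs the comparisons $U_i$ vs.\ $U_j$ for $j\notin K\cup L$ on both sides), and there is no way to make these two bit-sets disjoint: half of a block's bits do not determine the needed comparisons. Your alternative --- comparing each $U_i$ to a fixed threshold --- removes the globality but reintroduces conditioning on ``exactly $k$ selected'', and here your assessment that this is ``routine bookkeeping about constants'' is wrong in an essential way: $P_{k,n}$ \emph{is} exactly $P_p$ conditioned on having $k$ ones (for any $p$), so any soft argument that pushed Reimer's inequality \eqref{bkr-eq} through such a conditioning (or through a limit of conditionings whose probability does not tend to $1$) would make Theorem \ref{bkrx} an immediate corollary of Theorem \ref{bkr}; no inequality of the form $P(A\square B\mid C)\le P(A\mid C)P(B\mid C)$ follows from \eqref{bkr-eq}, even for increasing $A,B$ and nice $C$, and note also that your claim (ii) fails for the top-$k$ map, since selection of $i$ is decreasing in the other blocks. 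So the proposal, as it stands, has a genuine gap at its key combinatorial step.

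For contrast, the paper does not use Theorem \ref{bkr} at all, but Reimer's intermediate combinatorial result, Proposition \ref{reimer} ($|A\square B|\le|A\cap\bar B|$), together with two devices that resolve precisely the two difficulties you ran into. First, a cell decomposition of $\Om_{k,n}\times\Om_{k,n}$ into cells $W_\al$ on which the two configurations agree on $K$ and are complementary on $K^c$ reduces \eqref{bkrx-eq} to a counting inequality \eqref{reimx-eq} on the balanced hypercube, i.e.\ to Proposition \ref{reimerx}; this replaces your conditioning/limiting step by an exact identity. Second, Proposition \ref{reimerx} is proved by writing $P_{\frac m2,m}$ as a convex combination of measures supported on antithetic pairs $(\om_{\pi(2i-1)},\om_{\pi(2i)})\in\{(0,1),(1,0)\}$ and encoding each pair by a single bit; the monotonicity of $A$ and $B$ is used exactly once, to choose witnesses $K,L$ inside the support of $\om$ so that after encoding they hit disjoint pairs \eqref{T-arg2} --- this is the local, disjointness-preserving encoding that your global top-$k$ construction lacks. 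If you want to salvage your approach, you would need an analogous local encoding; the threshold/limiting route by itself cannot deliver the theorem.
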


\noindent
{\bf Remark:}
In Section \ref{sect-ext} we explain that this result and its proof extend to certain weighted versions of 
$P_{k,n}$ (also called conditional Poisson measures) and to products of such measures.

\smallskip
The rest of the paper is organized as follows:
In Section \ref{sect-reimer} we state Proposition \ref{reimer}, an intermediate result by Reimer which was of
crucial importance in his proof of Theorem \ref{bkr} (and which is also very interesting in itself).
In Section \ref{sect-proof-bkrx} we first state and prove Proposition \ref{reimerx}. This is an
analog of (and its proof uses) the above mentioned Proposition \ref{reimer}. Then we derive Theorem \ref{bkrx}
from Proposition \ref{reimerx}
in a way similar to that in which Reimer derived Theorem \ref{bkr} from Proposition \ref{reimer}.
We end the current section with some remarks which are of general interest but are not necessary for understanding the 
proof of Theorem  \ref{bkrx}.

\medskip\noindent
{\bf Remarks and discussion:} \\
(a) The example in the beginning of this section corresponds with the case 
$n = 30$ and $k = 10$ in Theorem \ref{bkrx}: Take 
$$A = \{ \om \in \Om \, : \,\, \mbox{supp}(\om) \mbox{ contains a set on Alice's list}\},$$  
where $\mbox{supp}(\om) = \{i \in [n] \, : \, \om_i =1\}$. Take $B$ similarly, but now with Bob's list. \\
(b)
One of the most widely used notions of negative dependence is NA (Negative Association), which is defined as follows.
First, two events $A, B \subset \Om$ are said to be orthogonal if there are two disjoint subsets $K, L \subset [n]$
such that $A$ is defined in terms of the indices in $K$ and $B$ in terms of the indices in $L$.
Now, a measure $\mu$ on $\Om$
is said to be NA if for all increasing, orthogonal events $A, B \subset \{0,1\}^n$,
$\mu(A \cap B) \leq \mu(A) \mu(B)$.

Note that if two events $A$ and $B$ are orthogonal, then clearly $A \square B = A \cap B$.
Hence BK implies NA. The reverse is not true (see \cite{M09}).

In the last twelve years there has been a lot of research activity aiming at a general theory of
negative dependence. This started with the papers \cite{P00} and \cite{DR98}. The understanding
of NA has enormously increased, in particular by an algebraic/(complex-)analytic
approach involving the zeroes of the generating polynomials (see \cite{B07}, 
\cite{BBL09}, \cite{BJ11}). Other techniques to study NA-related problems can be found in
\cite{KN10} and \cite{DJR07}.
However, so far these approaches do not work for the BK property
and it is unclear how this property would fit in a general framework. \\
(c) For some non-product measures, in particular Ising models, the following question makes sense: 
can the $\square$-operation be
{\em modified} in a natural way such that \eqref{bkr} holds for {\em all} events? 
This is investigated in \cite{BG11}.

\end{subsection}
\begin{subsection}{Reimer's intermediate result} \label{sect-reimer}
The following result, Proposition \ref{reimer} below, is essentially, but in different terminology,
Theorem 1.2 (or the equivalent Theorem 1.3) in \cite{R00}. 

As before, $\Om$ denotes $\{0,1\}^n$.
Some more notation is needed: For $\om = (\om_1, \cdots, \om_n) \in \Om$, we denote by $\bar\om$ the configuration
obtained from $\om$ by replacing $1$'s by $0$'s and vice versa:
$$\bar\om = (1-\om_1, \cdots, 1-\om_n).$$
Further, for $A \subset \Om$, we define
$\bar A \, = \, \{\bar\om \, : \, \om\in A\}$.
Finally, if $V$ is a finite set, $|V|$ denotes the number of elements of $V$.
Now we state Reimer's `intermediate' result to which we referred before:

\begin{prop}\label{reimer} {\em [Reimer \cite{R00}]} \\
For all $n$ and all $A, B \subset \{0,1\}^n$,
\begin{equation}\label{reimer-eq}
|A \square B| \leq |A \cap \bar B|.
\end{equation}
\end{prop}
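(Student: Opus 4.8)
The statement to prove is Reimer's inequality $|A \square B| \le |A \cap \bar B|$, and the natural route is to follow Reimer's own argument. The plan is to reduce everything to a purely combinatorial/linear-algebraic statement about families of subcubes, and then invoke a ``covering by a shifting/compression'' type argument. First I would rewrite the claim in terms of the collection of maximal subcubes: an element $\om \in A \square B$ is witnessed by a pair of disjoint coordinate sets $K, L$ with $[\om]_K \subset A$, $[\om]_L \subset B$; I would want to encode this by choosing, for each such $\om$, a canonical subcube inside $A$ and a disjoint one inside $B$. The key object is the ``butterfly'' or tensor-product-of-subcubes structure: given a subcube $C = [\om]_K$ and a disjoint subcube $D = [\om]_L$, one forms a larger combinatorial object and the crux is that these objects, as $\om$ ranges over $A\square B$, are in some sense disjoint, while each injects into $A \cap \bar B$.

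The main technical device I expect to need is a ``mirroring'' map. Since $A \cap \bar B = \{\om : \om \in A, \bar\om \in B\}$, I would try to build an injection $\phi : A \square B \to A \cap \bar B$. The idea: for $\om \in A \square B$ with witnesses $K, L$ (chosen canonically, e.g. lexicographically smallest pair, or via a greedy procedure), define $\phi(\om)$ to agree with $\om$ on $K$, to equal $\bar\om$ on $L$ (so that on $L$ it ``points into'' $B$ after complementation), and to be filled in on $[n] \setminus (K \cup L)$ in some prescribed way. One must check $\phi(\om) \in A$ (it agrees with $\om$ on $K$ and $[\om]_K \subset A$ — but we also changed coordinates outside $K$, which is \emph{fine for $A$} only because $K$ already determines $A$-membership, so any completion stays in $A$), and that $\overline{\phi(\om)} \in B$ (on $L$, $\overline{\phi(\om)} = \om_L$, and $[\om]_L \subset B$, so again any completion of the $L$-coordinates lands in $B$ — here we need $\overline{\phi(\om)}$ to agree with $\om$ on $L$, i.e. $\phi(\om)$ to equal $\bar\om$ on $L$, consistent with the construction). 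So membership is essentially automatic from the subcube structure; the real content is \textbf{injectivity}.

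The hard part will be proving that $\phi$ is injective — equivalently, recovering the pair $(K,L)$ and the original $\om$ from $\phi(\om)$ alone. This is exactly where Reimer's argument is subtle and where a naive construction fails: two different $\om$'s with different witness pairs can easily collide. The resolution is not to fix a single clever $\phi$ but to argue by a counting/entropy or a graded induction: one stratifies $A \square B$ according to the witness structure and pushes the inequality through an induction on $n$ by conditioning on the last coordinate, using that the ``disjoint occurrence'' operation interacts well with restriction (if $\om \in A \square B$ then the restrictions $\om|_{[n-1]}$ lie in $A_i \square B_j$ for the appropriate restricted events $A_i, B_j$, $i,j \in \{0,1\}$). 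One then needs a two-variable inequality of the form $|A_0 \square B_0| + |A_1 \square B_1| \le |(A_0 \cap \bar B_1)| + |(A_1 \cap \bar B_0)|$ — note the \emph{crossing} of indices induced by the bar — combined with the inductive hypothesis applied to each restricted pair, plus an elementary lemma handling how $\square$ on the full cube decomposes over the last coordinate. Managing this crossing (the complement flips $0 \leftrightarrow 1$ in the last coordinate) and verifying that the four restricted contributions can be repackaged without loss is the genuine obstacle; everything else is bookkeeping with subcubes.
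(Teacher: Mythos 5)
There is a genuine gap, and it sits exactly where you say it does. First, note that the paper you are commenting on does not prove Proposition \ref{reimer} at all: it is quoted from Reimer \cite{R00}, and the authors explicitly remark that its linear-algebraic proof was the crucial (and very ingenious) part of that paper. So your task was in effect to reprove Reimer's theorem, and your text is a plan rather than a proof: the construction of $\phi$ (agree with $\omega$ on $K$, with $\bar\omega$ on $L$, fill in the rest) does land in $A\cap\bar B$ for trivial reasons, as you observe, but the injectivity---which is the entire content of the theorem---is left as ``the hard part'' and never addressed. Reimer's actual argument does not produce an injection at all; it is a dimension/rank count for the family of pairs $(\omega,\tilde\omega)$ (the ``butterflies''), and none of the steps you outline recovers it.

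Moreover, the fallback route you sketch, induction on the last coordinate, cannot be repaired in the form you state. Writing $A_\epsilon=\{\omega'\in\{0,1\}^{n-1}:(\omega',\epsilon)\in A\}$, one does have $|A\cap\bar B|=|A_0\cap\overline{B_1}|+|A_1\cap\overline{B_0}|$ and the (lossy) containments $(A\square B)_\epsilon\subset A_\epsilon\square B_\epsilon$, so your proposed two-variable inequality
\begin{equation*}
|A_0\square B_0|+|A_1\square B_1|\;\le\;|A_0\cap\overline{B_1}|+|A_1\cap\overline{B_0}|
\end{equation*}
would indeed suffice---but it is false for general restricted events. Take $A_1=B_1=\emptyset$ and $A_0=B_0=\{0,1\}^{n-1}$ (these arise from $A=B=\{\omega:\omega_n=0\}$): the right-hand side is $0$ while the left-hand side is $2^{n-1}$. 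The loss occurs precisely in the bound $|A\square B|\le|A_0\square B_0|+|A_1\square B_1|$ (in this example $A\square B=\emptyset$ because both witnesses would need the coordinate $n$), and no coordinate-by-coordinate induction of this shape is known to close that gap---this is essentially why the conjecture stood open for some fifteen years before Reimer. So the proposal, as it stands, reduces the statement to an unproved (and, in the form stated, false) claim and does not constitute a proof.
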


\noindent
{\bf Remarks}: \\
(a) The very ingenious, linear-algebraic, proof of this proposition was the crucial part of Reimer's 
paper \cite{R00}.
The fact that
a result of the form of this proposition would imply Theorem \ref{bkr}
had already been discovered independently by other researchers (see \cite{T94}) . \\
(b) The language/terminology in \cite{R00} is somewhat unusual (for probabilists). This makes it, at first
sight, difficult to see that Proposition \ref{reimer} above is indeed equivalent to Theorem 1.2 in \cite{R00}.
Several authors have reviewed Reimer's paper with additional explanation
(see \cite{BCR99}).

\end{subsection}
\end{section}

\begin{section}{Proof of Theorem \ref{bkrx}} \label{sect-proof-bkrx} 
\begin{subsection}{An analog of Proposition \ref{reimer} for $k$-out-of-$n$ measures} \label{butx}
Let $k \leq n$, and recall the notation in Section \ref{sect-defs}.
The key to Theorem \ref{bkrx} is the (proof of the) following `analog' of Proposition \ref{reimer}:

\begin{prop}\label{reimerx}
For all even $m$ and all increasing $A, B \subset \{0,1\}^m$,
\begin{equation}\label{reimerx-eq}
P_{\frac{m}{2},m}(A \square B) \leq P_{\frac{m}{2},m}(A \cap \bar B).
\end{equation}
\end{prop}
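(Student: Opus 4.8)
The natural strategy is to deduce Proposition \ref{reimerx} from Reimer's combinatorial Proposition \ref{reimer} by a suitable ``transfer'' between the uniform measure on the $\frac{m}{2}$-sheet $\Om_{\frac{m}{2},m}$ and counting arguments on $\{0,1\}^m$. The point is that for $k = m/2$ the map $\om \mapsto \bar\om$ preserves $\Om_{\frac{m}{2},m}$, so both sides of \eqref{reimerx-eq} are (up to the common normalizing factor $1/\binom{m}{m/2}$) cardinalities of subsets of this single slice; the inequality to prove becomes $|A \square B \cap \Om_{\frac{m}{2},m}| \leq |A \cap \bar B \cap \Om_{\frac{m}{2},m}|$. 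So the goal is a ``layer version'' of Reimer's inequality, restricted to the middle layer, for increasing $A,B$.

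First I would reduce to the case where $A$ and $B$ are themselves determined only by the middle layer, or more precisely replace $A$ by its ``down-closure restricted to the slice'' and similarly for $B$; since $A,B$ are increasing, an element of the slice lies in $A$ iff its support contains the support of some minimal element of $A$, and the disjoint-occurrence event $A\square B$ on the slice only sees the restrictions of $A$ and $B$ to supports of size $\le m/2$. Then I would try to run Reimer's linear-algebra machine not on all of $\{0,1\}^m$ but adapted to the slice: Reimer's argument produces, for each $\om \in A\square B$, a witness pair $(K,L)$ of disjoint ``certificate'' sets and builds an injection-like map via a dimension/rank count of certain exterior-algebra (or sign-matrix) operators. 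The key structural fact that makes the middle layer special is that a configuration and its complement have the same number of $1$'s, so the ``flip on $L$'' operation that sends a point certifying $A\square B$ to a point in $A\cap\bar B$ stays inside $\Om_{\frac{m}{2},m}$ precisely when $|K| = |L|$ after balancing — and for increasing events one can always choose minimal certificates, then pad $K$ and $L$ with coordinates on which $\om$ is $0$ (resp.\ $1$) so that the flip is measure-preserving on the slice. I expect the cleanest route is: (i) choose for each $\om \in A\square B \cap \Om_{\frac{m}{2},m}$ disjoint minimal witnesses $K_\om \subset \mathrm{supp}(\om)$, $L_\om \subset \mathrm{supp}(\om)$ for $A,B$; (ii) show $|K_\om| + |L_\om| \le m/2$ is not automatic, so instead work with the complement and use that $\om \in A$, $\bar\om \in B$ forces a disjointness that is exactly captured by Reimer on the pair $(A, \bar B)$; (iii) invoke Proposition \ref{reimer} with $B$ replaced by $\bar B$, noting $\overline{\bar B} = B$ and $A \square \bar B$ versus $A \cap B$, while simultaneously tracking the slice.

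Concretely, the slickest version of the argument I would aim to write: apply Proposition \ref{reimer} directly to the events $A' = A \cap \Om_{\frac m2 ,m}$-``up-closure'' is awkward, so instead I would not restrict the ambient cube at all. Observe that $P_{\frac m2,m}(A\square B) = \binom{m}{m/2}^{-1}\,|A\square B\,\cap\,\Om_{\frac m2,m}|$ and likewise $P_{\frac m2,m}(A\cap\bar B) = \binom{m}{m/2}^{-1}\,|A\cap\bar B\,\cap\,\Om_{\frac m2,m}|$, because $\overline{\Om_{\frac m2,m}} = \Om_{\frac m2,m}$. Thus it suffices to exhibit an injection from $A\square B \cap \Om_{\frac m2,m}$ into $A \cap \bar B \cap \Om_{\frac m2,m}$. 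For $\om$ in the former, pick disjoint $K,L$ with $[\om]_K \subset A$, $[\om]_L \subset B$; since $A$ is increasing we may take $K \subset \mathrm{supp}(\om)$ and, likewise, $L \subset \mathrm{supp}(\om)$, and these are disjoint so $|K| + |L| \le |\mathrm{supp}(\om)| = m/2$. Now define $\om' $ by flipping, on a carefully chosen set of coordinates, to move the ``$L$-part'' into the zero-set: the target should satisfy $\om' \in A$ (keep the $K$-coordinates, which are $1$'s, untouched — $\om'$ still dominates the certificate for $A$) and $\bar{\om'} \in B$ (so that $\om'_L$, after complementation, reproduces an all-ones pattern on a $B$-certificate). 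The bookkeeping that makes this a well-defined injection landing in the middle slice is exactly the content of Reimer's rank argument; the honest thing is to cite Proposition \ref{reimer} as a black box applied to $A$ and $\bar B$ and then check separately that Reimer's construction, restricted to inputs in $\Om_{\frac m2,m}$ and with $A,B$ increasing, produces outputs in $\Om_{\frac m2,m}$. \textbf{The main obstacle} is precisely this last point: Reimer's injection from $A\square \bar B$ into $A \cap \overline{\bar B} = A \cap B$ is not manifestly measure-preserving on each layer, and one must either (a) reprove enough of Reimer's linear algebra to see that for increasing events the construction respects the middle layer, or (b) find a direct combinatorial injection for increasing $A,B$ that bypasses the full strength of Reimer's theorem and visibly preserves the number of ones. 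I would expect the paper to follow route (a), threading the layer-preservation through Reimer's proof, which is where essentially all the work lies; everything else above is routine.
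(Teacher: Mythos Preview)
Your plan correctly identifies the target as the counting inequality
\[
|(A\square B)\cap\Om_{\frac m2,m}| \leq |A\cap\bar B\cap\Om_{\frac m2,m}|,
\]
and you correctly isolate the obstacle: Reimer's injection from $A\square B$ into $A\cap\bar B$ is a linear-algebraic dimension count, not an explicit map, and there is no evident reason it should respect the Hamming weight. Neither of your proposed routes is carried out, and in fact neither is what the paper does. Route (a) --- threading layer-preservation through Reimer's rank argument --- would require reopening that proof in a substantial way, and it is not clear it can be made to work. Route (b) is left entirely unspecified. So as it stands the proposal has a genuine gap: the central idea is missing.

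The paper's actual argument bypasses layer-preservation altogether. The key observation is that $P_{\frac m2,m}$ can be written as a convex combination of much simpler measures, each of which is a disguised product measure. Concretely: pair up the $m$ indices by a uniformly random perfect matching (equivalently a random permutation $\pi$), and conditionally on the matching let each pair independently be $(1,0)$ or $(0,1)$ with probability $\tfrac12$ each. The resulting law on $\{0,1\}^m$ is exactly $P_{\frac m2,m}$. For a \emph{fixed} matching, the support $\hOmn$ is in bijection with $\{0,1\}^{m/2}$ via $(1,0)\mapsto 1$, $(0,1)\mapsto 0$ on each pair; under this encoding the conditional measure is just $P_{1/2}$ on $\{0,1\}^{m/2}$, and one checks (using that $A,B$ are increasing, so witnesses $K,L$ may be taken inside $\mathrm{supp}(\om)$ and hence hit each pair at most once between them) that $A\square B$ encodes into a subset of $T(A)\square T(B)$ while $A\cap\bar B$ encodes exactly into $T(A)\cap\overline{T(B)}$. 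Now Proposition~\ref{reimer} applies \emph{on the small cube} $\{0,1\}^{m/2}$ with no layer restriction at all. Averaging over the random matching gives \eqref{reimerx-eq}. The monotonicity of $A$ and $B$ is used precisely to guarantee that the encoded witnesses $T(K),T(L)$ are disjoint; this is the step your sketch gestures at with ``$|K|+|L|\le m/2$'' but does not exploit in the right way.
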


We will show in Section \ref{reimx-1} that Theorem \ref{bkrx} follows from Proposition \ref{reimerx}.
Finally, in Section \ref{sect-proof-reimerx}, we will prove Proposition \ref{reimerx} 
by writing $P_{\frac{m}{2},m}$ as a suitable convex combination of measures for
which the analog of \eqref{reimerx-eq} can be derived from Proposition \ref{reimer} via a suitable
`encoding'. 

\end{subsection}

\begin{subsection}{Proof that Proposition \ref{reimerx} implies Theorem \ref{bkrx}} \label{reimx-1}
\begin{proof}
The proof below is quite similar to the proof (in \cite{R00}) that Proposition \ref{reimer} implies Theorem \ref{bkrx}.

Let $A$ and $B$ $\subset \Om$ be increasing, and let $k \leq n$.
We first rewrite the desired inequality, \eqref{bkrx-eq}, in an obvious way:

\begin{equation} \label{bkrx-equiv-eq}
(P_{k,n} \times P_{k,n}) ((A \square B) \times \Om) \leq (P_{k,n} \times P_{k,n}) (A \times B).
\end{equation}

\noindent
For each $K \subset [n]$ and $\al \in \{0,1\}^K$, define the `cell'

$$W_{\al} = \{ (\om, \om') \in \Om_{k,n} \times \Om_{k,n} \, :
\, \om_K = \om'_K = \al, \, \om_{K^c} = \overline{\om'_{K^c}}\},$$
where $K^c$ denotes $[n] \setminus K$.

\noindent
It is easy to see that these cells form a partition of $\Om_{k,n} \times \Om_{k,n}$. Hence it is sufficient to prove that,
for each $\al$ of the form mentioned above

\begin{equation} \label{bkrx-eq-cell}
|\left(\left(A \square B\right) \times \Om\right) \cap W_{\al}| 
\leq |\left(A \times B\right) \cap W_{\al}|.
\end{equation}

\noindent
Using the notation $|\al| = \sum_{i=1}^{n} \al_i$, notice that if $W_{\al} \neq \emptyset$ then
(since for every $(\om,\om') \in W_{\al}$ one has $2 k = |\om| + |\om'| = 2 |\al| + n - |K|$),

\begin{equation} \label{al-cond}
|\al| = k - (n - |K|)/2,
\end{equation}
and hence

\begin{equation} \label{om-cond}
|\om_{K^c}| = |\om'_{K^c}| = (n - |K|)/2 \, \mbox{ for all } (\om, \om') \in W_{\al}.
\end{equation}


Before going on, we introduce more notation.
Let $\Om_{(K^c)}$ be the set of all $\om \in \{0,1\}^{K^c}$ for which the
number of $1$'s and the number of $0$'s are equal (and hence equal to $(n- |K|)/2$). 
Define, for $\gamma \in \{0,1\}^{K^c},$ 
$\gamma \circ \alpha$ as the element of $\Om$ for which

$$(\gamma \circ \alpha)_{K^c} = \gamma, \text{ and } (\gamma\circ \alpha)_K = \alpha.$$

\noindent
Further, define for every event
$H \subset \Om$,

$$H(\alpha) = \{\gamma \in \{0,1\}^{K^c} \, : \,  \gamma\circ\alpha \in H\}.$$

From now on we assume, without loss of generality, that $\al$ satisfies \eqref{al-cond}.
Now suppose that $(\om,\om')$ belongs to the set in the r.h.s. of \eqref{bkrx-eq-cell}.
This holds if and only if $\om_K = \om'_K = \al$, $\om_{K^c} \in \Om_{(K^c)}$, 
$\om \in A$, $\om' \in B$ and $\om_{K^c} = \overline{\om'_{K^c}}$.


\noindent
The number of pairs $(\om, \om')$ that satisfy this is clearly
$|A(\al) \cap \overline {B(\al)} \cap \Om_{(K^c)}|$.
Similarly, it is easy to see that the l.h.s. of \eqref{bkrx-eq-cell} is equal to $|(A \square B)(\al) \cap 
\Om_{(K^c)}|$.
Further, it is easy to check from the $\square$-definition that

\begin{equation}\label{AB-eq}
(A \square B)(\al) \subset A(\al) \square B(\al)
\end{equation}

\noindent
So the l.h.s. of \eqref{bkrx-eq-cell} is at most $|(A(\al) \square B(\al)) \cap \Om_{(K^c)}|$. Hence, sufficient for 
\eqref{bkrx-eq-cell} to hold is

\begin{equation} \label{reimx-eq}
|(A(\al) \square B(\al)) \cap \Om_{(K^c)}|
\leq |A(\al) \cap \overline {B(\al)} \cap \Om_{(K^c)}|.
\end{equation}

\noindent
Finally, note that this last inequality follows immediately from Proposition \ref{reimerx}. (Replace the $m$ in
\eqref{reimerx-eq} by $ n - |K|$, and replace $A$ and $B$ by $A(\al)$ and $B(\al)$ respectively;
note that $A(\al)$ and $B(\al)$ are increasing because $A$ and $B$ are increasing).
This completes the proof that Proposition \ref{reimerx} implies Theorem \ref{bkrx}. \end{proof}

\end{subsection}

\begin{subsection}{Proof of Proposition \ref{reimerx}} \label{sect-proof-reimerx}
We first state and prove Proposition \ref{reimerc} below. Let $m$ be even. Let $\hOmn$
be the set of all $\om \in \{0,1\}^{ m}$ with the property that, for all $1 \leq i \leq m/2$,
$(\om_{2 i -1}, \om_{2 i})$ is equal to $(1,0)$ or $(0,1)$. Let $\hPn$ be the probability distribution on
$\{0,1\}^m$ which assigns equal probabilities to all $\om \in \hOmn$, and probability $0$ to all other $\om$.

The following is, as we will see, an `encoded form' of Proposition \ref{reimer}.

\begin{prop}\label{reimerc}
For all even $m$ and all increasing $A, B \subset \{0,1\}^{m}$,
\begin{equation}\label{reimerc-eq}
\hPn(A \square B) \leq \hPn(A \cap \bar B).
\end{equation}
\end{prop}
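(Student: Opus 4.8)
The plan is to exhibit a measure-preserving bijection between $\hOmn$ and $\{0,1\}^{m/2}$ and transport the $\square$-operation through it so that \eqref{reimerc-eq} becomes a direct consequence of Reimer's counting inequality, Proposition \ref{reimer}, applied on $\{0,1\}^{m/2}$. Concretely, write $\ell = m/2$ and define $\phi\colon \{0,1\}^\ell \to \hOmn$ by sending $\eta = (\eta_1,\dots,\eta_\ell)$ to the configuration whose $i$-th coordinate block $(\om_{2i-1},\om_{2i})$ equals $(1,0)$ if $\eta_i=1$ and $(0,1)$ if $\eta_i=0$. This is a bijection onto $\hOmn$, and since $\hPn$ is uniform on $\hOmn$ and the uniform measure on $\{0,1\}^\ell$ assigns each point mass $2^{-\ell}$, for any $H \subset \{0,1\}^m$ we have $\hPn(H) = 2^{-\ell}\,|\phi^{-1}(H \cap \hOmn)|$. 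So it suffices to prove, for the sets $\tilde A = \phi^{-1}(A\cap\hOmn)$ and $\tilde B = \phi^{-1}(B\cap\hOmn)$ in $\{0,1\}^\ell$, the inequality $|\tilde A \square \tilde B| \le |\tilde A \cap \overline{\tilde B}|$, where the $\square$ and complementation on the right are now taken in $\{0,1\}^\ell$.

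The heart of the matter is the translation of the operations. First, complementation: flipping $\eta_i$ flips the block $(\om_{2i-1},\om_{2i})$ between $(1,0)$ and $(0,1)$, so $\phi(\bar\eta)$ is exactly the configuration obtained from $\phi(\eta)$ by swapping the two coordinates within each pair; in particular $\phi(\bar\eta)$ is again in $\hOmn$. One then checks that for increasing $B$, the set $\phi^{-1}(\bar B \cap \hOmn)$ — the $\eta$ for which the \emph{pair-swap} of $\phi(\eta)$ lands in $B$ — coincides with $\overline{\tilde B}$; this is where I expect to lean on the fact that $A,B$ are increasing and on how "$\bar{\phantom{B}}$ inside $\hOmn$'' interacts with the pair structure. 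Second, the $\square$-operation: I want $\phi^{-1}\big((A\square B)\cap\hOmn\big) \subset \tilde A \square \tilde B$ (an inclusion in this direction is all that is needed, since it only weakens the left side). If $\om \in \hOmn$ witnesses $A\square B$ via disjoint $K,L \subset [m]$ with $[\om]_K \subset A$, $[\om]_L \subset B$, then I would replace $K$ by its "saturation'' — the union of all coordinate-pairs meeting $K$ — and observe that, because inside $\hOmn$ the two coordinates of a pair are forced to be complementary, fixing even one coordinate of a pair determines both; hence $[\om]_K \cap \hOmn = [\om]_{\text{sat}(K)} \cap \hOmn$, and one can assume $K,L$ are unions of pairs, which then descend to disjoint subsets of $[\ell]$ witnessing $\tilde A \square \tilde B$.

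With these two translations in hand, the proof closes immediately: $\hPn(A\square B) = 2^{-\ell}|\phi^{-1}((A\square B)\cap\hOmn)| \le 2^{-\ell}|\tilde A \square \tilde B| \le 2^{-\ell}|\tilde A \cap \overline{\tilde B}| = 2^{-\ell}|\phi^{-1}(A\cap\bar B\cap\hOmn)| = \hPn(A\cap\bar B)$, where the middle inequality is Proposition \ref{reimer} on $\{0,1\}^\ell$ and the last equality uses the complementation translation. The main obstacle I anticipate is the bookkeeping in the $\square$-translation — specifically, verifying carefully that saturating the certificates $K$ and $L$ to unions of pairs keeps them disjoint and still certifies $A$ and $B$ respectively when restricted to $\hOmn$; the increasingness of $A$ and $B$ should make the "still certifies'' part routine, but one has to be attentive that disjointness of $K$ and $L$ at the pair level is not destroyed (it isn't, since disjoint sets of indices have disjoint pair-saturations). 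The complementation step is the second place to be careful, but it is essentially a direct unwinding of definitions once the pair-swap picture is set up.
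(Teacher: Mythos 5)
Your overall strategy --- transporting the problem through the pair bijection and invoking Proposition \ref{reimer} on $\{0,1\}^{m/2}$ --- is exactly the paper's route, and your complementation step is fine (indeed it needs no increasingness at all: inside $\hOmn$ the global flip coincides with the pair-swap, so $\phi^{-1}(\bar B\cap\hOmn)=\overline{\tilde B}$ for arbitrary $B$). The genuine gap is in the $\square$-translation, at precisely the point you flagged and then dismissed. Your justification of disjointness, ``disjoint sets of indices have disjoint pair-saturations,'' is false: $K=\{1\}$ and $L=\{2\}$ are disjoint but both saturate to the pair $\{1,2\}$. Moreover this is not a repairable bookkeeping slip of the saturation device, because the inclusion $\phi^{-1}\bigl((A\square B)\cap\hOmn\bigr)\subset\tilde A\,\square\,\tilde B$ genuinely fails for general events: take $m=2$, $A=\{\om_2=0\}$, $B=\{\om_1=1\}$; then $\om=(1,0)\in A\square B$ via $K=\{2\}$, $L=\{1\}$, yet $\tilde A=\tilde B=\{1\}\subset\{0,1\}^{1}$ and $\tilde A\,\square\,\tilde B=\emptyset$. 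So any correct argument must use the increasingness of $A$ and $B$ exactly here --- for the disjointness of the descended witnesses --- and not merely for the ``still certifies'' part, which is automatic (enlarging the fixed index set only shrinks the cylinder, so $[\om]_{\mathrm{sat}(K)}\subset[\om]_K\subset A$ anyway).

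The repair is the one the paper makes. Since $A$ is increasing and $[\om]_K\subset A$, you may replace $K$ by $K\cap\{i:\om_i=1\}$: any configuration agreeing with $\om$ on this smaller set dominates one agreeing with $\om$ on all of $K$, so the shrunken set still certifies $A$; similarly for $L$ and $B$. Once $K$ and $L$ are contained in the support of $\om$, and $\om\in\hOmn$ has exactly one $1$ in each pair $\{2i-1,2i\}$, disjointness of $K$ and $L$ forces them to meet disjoint collections of pairs, so the projected index sets in $[m/2]$ are disjoint and the inclusion into $\tilde A\,\square\,\tilde B$ holds. With that substitution your proof closes exactly as you outline and coincides with the paper's proof.
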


\begin{proof}
Let $A, B \subset \{0,1\}^{m}$ be increasing.
Note that \eqref{reimerc-eq} is equivalent to 

\begin{equation} \label{reimerc-eq-eq}
|(A \square B) \cap \hOmn| \leq |A \cap \bar B \cap \hOmn|.
\end{equation}

\noindent
Consider the following bijection $T: \hOmn \rightarrow \{0,1\}^{\frac{m}{2}}$:

$$T(\om_1, \cdots, \om_m) = (f((\om_1, \om_2), f(\om_3, \om_4), \cdots, f(\om_{m -1}, \om_m)),$$
where $f(1,0) = 1$ and $f(0,1) = 0$. 

\noindent
We claim that

\begin{equation}\label{check1}
T((A \square B) \cap \hOmn ) \subset  T(A \cap \hOmn) \,\, \square \,\, T(B \cap \hOmn),
\end{equation}
and that 

\begin{equation}\label{check2}
 T(A \cap {\bar B} \cap \hOmn) = T(A \cap \hOmn) \cap \overline{ T(B \cap \hOmn)}.
\end{equation}

\noindent
The first part of this claim, the inclusion \eqref{check1}, can be seen as follows: Let 
$\om = (\om_1, \cdots, \om_n) \in (A \square B) \cap \hOmn.$ 
By the definition of the $\square$-operation, there are $K, L \subset [n]$ such that 
$K \cap L = \emptyset$, $[\om]_K \subset A$, and $[\om]_L \subset B$. It is easy to see that this implies

\begin{equation} \label{T-arg}
[T(\om)]_{T(K)} \subset T(A \cap \hOmn), \, \mbox{ and } 
[T(\om)]_{ T(L)} \subset T(B \cap \hOmn),
\end{equation}
where $T(K) = \{\lceil i/2 \rceil \, : \, i \in K\}$ and $T(L)$ is defined analogously.
So far, the argument holds for all events. However, since $A$ and $B$ are increasing, we can even find
$K$ and $L$ such that, on top of the above properties, $\om \equiv 1$ on $K$ and $\om \equiv 1$ on $L$,
and hence, since $\om \in \hOmn$, 
\begin{equation} \label{T-arg2}
\mbox{ For all } 1 \leq i \leq \frac{m}{2}, \, K \cap \{2 i -1, i\} = \emptyset \mbox{ or }
L \cap \{2 i -1, i\} = \emptyset.
\end{equation}
From \eqref{T-arg2} it follows immediately that $ T(K) \cap T(L) = \emptyset$,
which, together with \eqref{T-arg},
gives 
$$T(\om) \in T(A \cap \hOmn) \,\, \square \,\, T(B \cap \hOmn),$$
completing the proof of \eqref{check1}. We omit the proof of \eqref{check2} (which is straightforward).

Now, using, in this order, \eqref{check1}, Proposition \ref{reimer} and \eqref{check2}, immediately gives 
\eqref{reimerc-eq-eq}. 
This completes the proof of Proposition \ref{reimerc}.
\end{proof}

\smallskip\noindent
{\bf Remark:}
At first sight one may think that in
\eqref{check1} even equality holds, but this is false: Take $n = 4$, $A = \{\om_1 = 1\} \cap \{\om_3 =1 
\mbox{ or } \om_4 = 1\}$, and $B = \{\om_3 =1\} \cap \{\om_1 =1 \mbox{ or } \om_2 =1\}$.
Then $A \square B$ (and hence the l.h.s. of \eqref{check1}) is $\emptyset$, while
the r.h.s. of \eqref{check1} is the subset of $\{0,1\}^2$ which contains only the element $(1,1)$.


\medskip
Now we are ready to prove Proposition \ref{reimerx}:

\begin{proof}
Let $m$ be even, and recall the definition of $P_{\frac{m}{2},m}$.
Let $A, B \subset \{0,1\}^m$ be increasing.
Let, for $\pi$ a permutation of $[m]$, $\hOmnp$ be the set of all $\om \in \{0,1\}^m$ with the
property that, for each $1 \leq i \leq m/2$, 
$(\om_{\pi(2 i -1)}, \om_{\pi(2 i)})$ is equal to $(0,1)$ or $(1,0)$.  Let $\hPnp$ be the probability distribution
on $\{0,1\}^m$ which assigns equal probabilities to all $\om \in \hOmnp$ and probability $0$ to all other
$\om \in \{0,1\}^m$. It follows immediately from Proposition \ref{reimerc} (by relabelling the indices) that for each $\pi$

\begin{equation}\label{reimercp-eq}
\hPnp(A \square B) \leq \hPnp(A \cap \bar B).
\end{equation}

Now observe that if we first randomly (and uniformly) draw a permutation $\pi$ and then randomly
draw a configuration $\om$ according to the distribution $\hPnp$, then $\om$ is a `typical' random configuration
drawn according to $P_{\frac{m}{2},m}$.
So $P_{\frac{m}{2},m}$ is a convex combination of the $\hPnp$'s. Finally, since each $\hPnp$ satisfies
\eqref{reimercp-eq}, every convex combination of the $\hPnp$'s also satisfies \eqref{reimercp-eq}. Hence
\eqref{reimerx-eq} holds. This completes the proof of Proposition \ref{reimerx}, and hence the proof
of Theorem \ref{bkrx}.

\end{proof}

\end{subsection}
\end{section}

\begin{section}{Some extensions of Theorem \ref{bkrx}} \label{sect-ext}
\begin{subsection}{Weighted $k$-out-of-$n$ measures} \label{sect-weight}

Let, as before, $1 \leq k \leq n$, and
recall the definition of $\Om_{k,n}$ and the $k$-out-of-$n$ measure $P_{k,n}$. 
Let $w_1, \cdots, w_n$ be non-negative numbers, let $w = (w_1, \cdots, w_n)$, and define the
probability measure $P_{k,n}^w$ (a weighted version of $P_{k,n}$ which is sometimes called
Conditional Poisson measure)) as follows:

$$P_{k,n}^w(\om) = C \, \mbox{I}(\om \in \Om_{k,n})\, \prod_{i=1}^n w_i^{\om_i}, \,\,\, \om \in \{0,1\}^n,$$
where $C$ is a normalizing constant and I denotes indicator function.
It is not difficult to see that the analog of Theorem \ref{bkrx}
holds for the weighted measures defined above. In fact, the proof remains practically the same by the following
observation: Let $W_{\al}$ be a cell (as in Section \ref{reimx-1}) with $\al$ satisfying \eqref{al-cond}, and
let $(\om, \om') \in W_{\al}$.
Since, for each index $i \in K^c$, exactly one of $\om_i$ and $\om'_i$ equals $1$ and the other equals $0$,
each index $i \in K^c$ contributes exactly a factor $w_i$ to the $(P_{k,n}^w \times P_{k,n}^w)$ measure. Moreover,
each $i \in K$ contributes (by the definition of $W_{\al}$) exactly a factor $(w_i)^{2 \alpha_i}$.
Hence, the proof again reduces to showing \eqref{reimx-eq}.

\end{subsection}

\begin{subsection}{Products of (weighted) $k$-out-of-$n$ measures} \label{sect-prod}
The proof of the BK inequality for increasing events under $k$-out-of-$n$ measures extends straightforwardly to
that for products of such measures:
By the arguments of Section \ref{reimx-1} the proof reduces to showing that Proposition \ref{reimerx}
holds for products of measures of the form $P_{\frac{m}{2},m}$. Now recall from Section \ref{sect-proof-reimerx} that
the reason that Proposition \ref{reimerx} holds
is, essentially, that $P_{\frac{m}{2},m}$ is a convex combination of measures of the form $\hPnp$. Now,
of course, products of measures $P_{\frac{m_1}{2}, m_1}, \cdots P_{\frac{m_l}{2}, m_l}$, are convex combinations
of measures of the
form $\hat{P}_{M,\pi}$, where $M = m_1 + \cdots + m_l$, and $\pi$ is a permutation
of $[M]$. Hence the proof goes through as before.
The above argument also goes through for products of {\em weighted} $k$-out-of-$n$ measures.

\end{subsection}

\begin{subsection}{Some ideas for further generalizations} \label{sect-ideas}
With $w = (w_1, \cdots, w_n)$ as in Section \ref{sect-weight}, and $X$ a random variable taking values in
$\{0, \cdots, n\}$, define $P_{X,n}^w$ as the measure of the configuration $\omega$ resulting from the
following procedure. First draw a number $k$ from the same distribution as $X$. Then draw an 
$\om \in \Om_{k,n}$ according to the distribution $P_{k,n}^w$. Motivated by the search for other examples
of BK measures it is natural to ask: for which $X$ is $P_{X,n}^w$ BK?
Of course, by Section \ref{sect-weight}, this is the case if $X$ is with probability $1$ equal to some constant $k$. 
Other examples of such $X$ can be easily obtained from the result in Section \ref{sect-weight} by adding
`dummy' indices and then projecting:
Let $m \geq 0$, and introduce auxiliary weights  $w_{n+1}, \cdots w_{n+m} \geq 0$.
Let $0 \leq k \leq n+m$. From section \ref{sect-weight}
we have that $P_{k, n+m}^{(w_1, \cdots, w_{n+m})}$ (a measure on $\{0,1\}^{n+m}$) is BK.
From the definition of BK it follows immediately that the BK property is preserved under projections.
Hence, the projection of $P_{k, n+m}^{(w_1, \cdots, w_{n+m})}$ on $\{0,1\}^n$ is also BK.
In other words, if we let, for a random configuration $(\om_1, \cdots \om_{n+m})$ drawn under
$P_{k, n+m}^{(w_1, \cdots, w_{n+m})}$,
the random variable $X$ denote $\sum_{i=1}^n \om_i$, then $P_{X,n}^w$ is BK.
It is not hard (but also, at this stage, not very helpful) to write a general form
for the distribution of an $X$ of this type. It would be interesting to find `natural' random variables $X$
which are not of this type but yet have the property that $P_{X,n}^w$ is BK.

\end{subsection}
\end{section}

\bigskip\noindent
{\bf\large Acknowledgments} \\
We thank an anonymous referee for many detailed comments on an earlier
version of this paper.
We also thank Ronald Meester, whose questions revived our interest in
these problems. J.vdB. thanks Demeter Kiss for his comments during a private
presentation of the results.  J.J. thanks Matthijs Joosten for comments on
earlier work on this subject, and Klas Markstr\"om for stimulating
communication.

\end{document}